\documentclass{article}

\usepackage{amsmath,amssymb,amsthm}
\usepackage{tikz-cd}
\usepackage{enumerate}
\usepackage{mathtools}
\usepackage{appendix}
\usepackage{subfigure}
\usepackage{url}
\usepackage{hyperref}
\usepackage{xurl}

\newtheorem{lem}{Lemma}[section]

\newtheorem{rem}[lem]{Remark}

\newtheorem{thm}[lem]{Theorem}

\newtheorem{conj}{Conjecture}

\title{Rigidity, flexibility, and a sign error}
\author{Yank\i\ Lekili\ and\ Nick Sheridan}

\begin{document}

\maketitle

\begin{abstract}
    The documents \cite{openai1,openai2,openai3} claimed to prove cases of the Hodge conjecture via homological mirror symmetry, before being withdrawn by OpenAI. 
    We discuss the source of the error (but do not analyse the rest of the argument).
\end{abstract}

\section{Summary}

The attempted proofs of cases of the Hodge conjecture via homological mirror symmetry in the documents \cite{openai1,openai2,openai3}, withdrawn by OpenAI, all broke down in the same way, at the point where they attempted to construct an unobstructed Lagrangian immersion in a symplectic manifold. They constructed a graded Lagrangian immersion using the Gromov--Lees $h$-principle, and claimed that it could be modified so that all self-intersection points were of odd degree, by \cite[Theorem 1.1]{EEMS}. 
This modification was crucial for the ensuing argument that the immersion was unobstructed, because only self-intersection points of even degree, in fact only those of degree 2, contribute to the `obstruction space'.

In fact, inspection of the proof of \cite[Theorem 1.1]{EEMS} (or the variations proved in \cite{openai1,openai2,openai3}) shows that it always produces an immersion with a distinguished self-intersection point $p$ of degree $2$; in particular, it can never eliminate all even self-intersection points. 
Applying \cite[Theorem 1.1]{EEMS} correctly only allows to modify the Lagrangian immersion so that all of the self-intersection points \emph{except for $p$} are of odd degree. The presence of the degree-$2$ self-intersection point $p$ causes the proof of unobstructedness to break down. 

Not only does the proof of unobstructedness break down, the Lagrangian immersions produced by \cite[Theorem 1.1]{EEMS} are typically obstructed. More precisely, there is a leading term in the obstruction, arising from a rigid holomorphic disc ending on $p$; we explain why this causes the Lagrangian to be obstructed in a certain regime. Even outside this regime, there is no reason to expect that this term in the obstruction would be cancelled.

The error arose because \cite{openai1,openai2,openai3} computed signs of self-intersection points with the standard convention, whereas \cite{EEMS} implicitly use the opposite convention (this was apparently first pointed out and reported to OpenAI by Nathan Humphrey \cite{blogpost,github}). 
The authors of \cite{EEMS} do not say explicitly that their results are stated using the opposite of the standard convention; on the other hand, they use this convention consistently, the convention can be deduced from formulae in the paper and from reading the proof, and the results are correct if their choice of convention is understood. 

The obstructedness of the Lagrangian immersions produced by \cite[Theorem 1.1]{EEMS} is consistent with a heuristic principle in symplectic topology, namely the dichotomy between \emph{flexibility} (by which is roughly meant, problems which can be solved purely using techniques from differential topology, such as Gromov's $h$-principle \cite{Gromov:hprin}) and \emph{rigidity} (by which is roughly meant, problems for which such techniques do not suffice); cf. \cite{Yasha}. 
The clarification of the border between flexibility and rigidity is a key question in the field, but unobstructed Lagrangians are generally rigid. 
Typically one can get from something rigid to something flexible, but not the other way round. This caused several researchers to be sceptical of the rather general claimed construction of unobstructed Lagrangian immersions via $h$-principles in \cite{openai1,openai2,openai3}, before they were withdrawn. 
On the other hand, such intuitions can of course be mistaken -- \cite[Theorem 1.1]{EEMS} itself being an example of a result which contradicted expectations on the rigidity of self-intersection numbers of Lagrangian immersions.

We find that the release of the documents \cite{openai1,openai2,openai3} has led to the following positive outputs:
\begin{enumerate}
    \item The approach to cases of the Hodge conjecture via homological mirror symmetry which they propose is interesting and worth pursuing; to our knowledge it has not appeared in print as such, although it has been tried without success. The construction of unobstructed Lagrangians is the key difficulty. If this is overcome, the documents \cite{openai1,openai2,openai3} may well provide a useful guide to other technical aspects of the problem. 
    \item The fact that the results of \cite{EEMS} are stated using the opposite of the standard sign convention has been clarified.
    \item The question of whether the Lagrangian immersions constructed in \cite{EEMS} can be unobstructed has been clarified.
\end{enumerate}

\paragraph{Acknowledgements:} N.S.'s work is supported by the Leverhulme Trust through the Leverhulme Prize, and a Simons Investigator award (award number 929034).

\section{The problem and the strategy of proof}\label{sec:hodge}

We are concerned with (see \cite{Deligne2006Hodge}):

\begin{conj}[Hodge]\label{conj:hodge}
For every smooth complex projective variety $X$, and every class $\alpha \in H^{2p}(X;\mathbb{Q}) \cap H^{p,p}(X)$, $\alpha$ can be expressed as a rational linear combination of classes $\mathrm{cl}(Z)$ of algebraic cycles $Z$.
\end{conj}

The documents \cite{openai1,openai2,openai3} aim to prove cases of Conjecture \ref{conj:hodge}: i.e., to prove the statement for certain varieties $X$ and classes $\alpha$. 
Modulo important technical details (which, however, are not the subject of this note and which we do not evaluate), the approach is as follows. 

First, they show that each variety $X$ that they consider satisfies Kontsevich's homological mirror symmetry conjecture \cite{kontsevich:hms}.\footnote{The key works to establish the relevant cases of Kontsevich's conjecture are due to Abouzaid \cite{Abouzaid:semiflat} and Seidel \cite{Seidel:quartic}.} 
I.e., there exists a specific symplectic $2n$-manifold $Y$ (called the `mirror' of $X$) and a quasi-equivalence  
\begin{equation}\label{eq:hms}
    \mathrm{D^bCoh}(X) \simeq \mathrm{Fuk}(Y).
\end{equation}

In order to represent a cohomology class $\alpha \in H^{p,p}(X)$ by an algebraic cycle, it then becomes sufficient to represent a corresponding homology class $\beta \in H_n(Y)$ by an unobstructed Lagrangian immersion: i.e., to find an unobstructed Lagrangian immersion $f:L \to Y$ satisfying $f_*[L] = \beta$. 

This is a valid approach to representing Hodge classes by algebraic cycles.\footnote{For an approach to constructing novel algebraic cycles by showing a mirror Lagrangian is unobstructed, without reference to the Hodge conjecture, see \cite{Hicks} -- note that this is a different kind of Lagrangian to those considered in \cite{openai1,openai2,openai3}.}   
The problem is that constructing unobstructed Lagrangians is also a hard problem.  
It was in the construction of this unobstructed Lagrangian immersion $f:L \to Y$ that the proofs in \cite{openai1,openai2,openai3} broke down. 

\section{Where the proof of unobstructedness breaks down}\label{sec:unob}

First, we explain what is meant by unobstructedness. 
Given a graded spin Lagrangian immersion $f:L \to Y$, with transverse self-intersection points 
$$\mathrm{SInt}(f)\coloneq \{p \in L: \exists q \in L, q \neq p, f(p) = f(q)\},$$
there is an associated curved $A_\infty$ algebra 
\begin{align*}
    A^*_L &\coloneqq \bigoplus_{j \in \mathbb{Z}} A^j_L,\qquad\text{where}\\
    A^j_L& = C^j(L;\Lambda) \oplus \bigoplus_{p \in \mathrm{SInt}(f), |p| = j} \Lambda \cdot p
\end{align*}
linear over a certain valued field $\Lambda$, the Novikov field. 
The grading of $L$ determines a degree $|p| \in \mathbb{Z}$ for each $p \in \mathrm{SInt}(f)$, which is the degree in which it contributes to $A^*_L$, in accordance with \cite{Seidel:graded}.
In other words, there are filtration-respecting $\Lambda$-linear operations
$$\mathfrak{m}_k: A_L^{\otimes k} \to A_L$$
of degree $2-k$, for $k \ge 0$, satisfying the $A_\infty$ relations, defined by `counting' holomorphic discs in $Y$ with boundary on $L$; see \cite{AkahoJoyce,Fukaya:immersed} for the construction. 

A \emph{bounding cochain} for $L$ is an element $b \in A^1_L$ of positive valuation, which is a solution to the Maurer--Cartan equation
\begin{equation}\label{eq:MC}
\mathfrak{m}_0^b \coloneq \sum_{k \ge 0} \mathfrak{m}_k(b,\ldots,b) = 0.
\end{equation}
Note that the bounding cochain $b$ lives in degree $1$, while the output $\mathfrak{m}_0^b$ of the Maurer--Cartan equation (the `obstruction') lives in degree $2-k+k=2$.

If $L$ admits a bounding cochain, it is called \emph{unobstructed}. 
By definition, an object of the Fukaya category $\mathrm{Fuk}(Y)$ is a Lagrangian immersion $f:L \to Y$ equipped with a bounding cochain $b \in A^1_L$. 
That is why we needed to construct an \emph{unobstructed} Lagrangian in the previous section: only unobstructed Lagrangians define objects of the category showing up in the equivalence \eqref{eq:hms}, which is how information gets transferred from $Y$ to $X$.

The documents \cite{openai1,openai2,openai3} propose to use the following variation of a result of Fukaya--Oh--Ohta--Ono to construct an unobstructed Lagrangian immersion:\footnote{The documents \cite{openai1,openai2,openai3} each produce their own variation on Fukaya--Oh--Ohta--Ono's proof.}

\begin{thm}[Variation on Theorem 3.8.11 of \cite{fooo:1}]
\label{thm:unobs}
    Suppose that $Y$ is a $2n$-dimensional symplectic manifold, and $f: L \to Y$ a graded Lagrangian immersion such that the map $f_*: H_{n-2}(L;\mathbb{Q}) \to H_{n-2}(Y;\mathbb{Q})$ is injective, $[\mathrm{GW}_{0,1}(Y)(L)] = 0 \in H_{n-2}(Y; \mathbb{Q})$ (where $\mathrm{GW}_{0,1}(Y)(L)$ denotes the two-pointed, genus-zero Gromov--Witten invariant of $Y$ evaluated at the image of $L$), and that no self-intersection point $p \in \mathrm{SInt}(f)$ has degree $2$. 
    Then $L$ is unobstructed.
\end{thm}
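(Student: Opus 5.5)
We would follow the obstruction-theoretic argument of \cite[Theorem 3.8.11]{fooo:1}, adapted to the immersed setting of \cite{AkahoJoyce,Fukaya:immersed}. The idea is to build a bounding cochain $b$ order by order in the energy filtration. Write the Novikov valuation of $\mathfrak{m}_0$ as a discrete sequence of energy levels $0<\lambda_1<\lambda_2<\cdots$ coming from Gromov compactness. Suppose inductively that $b_{<k}\in A^1_L$ has been found with $\mathfrak{m}_0^{b_{<k}}\equiv 0 \bmod T^{\lambda_k}$.

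The leading term $o_k$ of $\mathfrak{m}_0^{b_{<k}}$ at energy $\lambda_k$ satisfies two properties.
\begin{enumerate}[(i)]
\item It is $\mathfrak{m}_1^{\mathrm{cl}}$-closed, where $\mathfrak{m}_1^{\mathrm{cl}}$ is the energy-zero part of $\mathfrak{m}_1$. This follows from the $A_\infty$ relation $\mathfrak{m}_1^{b}(\mathfrak{m}_0^b)=0$ (plus terms involving $\mathfrak{m}_2(\mathfrak{m}_0^b,\cdot)$, which vanish modulo higher order) applied to the deformed structure.
\item It has degree $2$.
\end{enumerate}
The energy-zero differential $\mathfrak{m}_1^{\mathrm{cl}}$ is the de Rham or Morse differential on $C^*(L)$ together with the zero map on the self-intersection generators, since constant discs at switching points contribute nothing and small strips are excluded by transversality and positive energy. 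Hence $o_k$ defines a class in
\[
H^2(C^*(L)) \oplus \bigoplus_{|p|=2}\Lambda\cdot p .
\]
By the hypothesis that no $p\in\mathrm{SInt}(f)$ has degree $2$, the second summand vanishes. So $[o_k]\in H^2(L;\mathbb{Q})\otimes\Lambda$, which by Poincaré duality is $H_{n-2}(L;\mathbb{Q})\otimes\Lambda$.

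Next we show $[o_k]=0$. Push it forward by $f_*$ to $H_{n-2}(Y;\mathbb{Q})$ and use injectivity of $f_*$. The claim to establish is a cobordism/divisor-type identity: $f_*[o_k]$ equals the contribution of sphere bubbles, namely the class $[\mathrm{GW}_{0,1}(Y)(L)]$ in the relevant energy, up to terms $\mathfrak{m}_1^{\mathrm{cl}}$-exact in $Y$. Concretely, we would consider the moduli space of discs with one boundary marked point, in class $\beta$ with $\omega(\beta)=\lambda_k$, carrying insertions of $b_{<k}$, together with an interior marked point. We would then evaluate at the interior point into $Y$ and examine the boundary of the resulting one-parameter family. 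Its ends consist of disc bubbling, which gives $\mathfrak{m}_0$-type terms already cancelled by induction and exact terms, and of the disc degenerating to a constant disc with a sphere attached, which gives the Gromov--Witten term. Since $[\mathrm{GW}_{0,1}(Y)(L)]=0$, this yields $f_*[o_k]=0$, hence $[o_k]=0$. Writing $o_k=\mathfrak{m}_1^{\mathrm{cl}}(c_k)$ with $c_k\in C^1(L)$, set $b_{<k+1}=b_{<k}-T^{\lambda_k}c_k$. Convergence in the Novikov topology then gives $b$ with $\mathfrak{m}_0^b=0$.

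The main obstacle is the third step: rigorously identifying the pushforward of the obstruction class with the Gromov--Witten class in the presence of immersed points and the previously chosen $b_{<k}$. One must check that boundary strata where the disc passes through self-intersection corners contribute only through $\mathfrak{m}_k$ terms already controlled by induction. One must also check that the cochain-level choices (Kuranishi perturbations compatible with forgetful maps) make the relevant chains cycles. We would first try to set this up with a single-valued perturbation scheme on a Morse model, following \cite[\S 3.8]{fooo:1}. The degree hypothesis on $p$ is used exactly at the second step, where a nonzero coefficient on a degree-$2$ generator $p$ could not be removed by modifying $b$ in $C^1(L)$, nor detected by $f_*$.
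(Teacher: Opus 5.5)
Your proposal is correct and follows the same deformation--obstruction argument as the paper's sketch. The leading obstruction lies in $A^2_L$, which equals $C^2(L;\Lambda)$ by the degree hypothesis, and its class in $H^2(L)\cong H_{n-2}(L)$ is killed by pushing forward with $f_*$, using $[\mathrm{GW}_{0,1}(Y)(L)]=0$ and injectivity. One bookkeeping correction to your third step: the chain should be the moduli space of discs with $b_{<k}$-insertions, no boundary output and one interior marked point (this is what makes it one dimension larger than $o_k$), and $f_*o_k$ then appears among its boundary strata as the disc bubble in which the interior point lies on a constant disc component, rather than among the terms already cancelled by induction.
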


The proof is by a deformation-obstruction argument. At each step one gets an obstruction class in $A^2_L$, which we must show vanishes. The vanishing of $[\mathrm{GW}_{0,1}(Y)(L)]$ and the injectivity of $f_*$ in degree $n-2$ is sufficient to prove that the component in $C^2(L;\Lambda)$ vanishes; and the non-existence of self-intersection points of degree $2$ implies that they do not contribute to the obstruction space, i.e. that $A^2_L = C^2(L;\Lambda)$, so this implies that the obstruction class vanishes. The vanishing of the Gromov--Witten invariant $[\mathrm{GW}_{0,1}(Y)(L)]$ is automatic for the spaces $Y$ considered in \cite{openai1,openai2,openai3}. 

For any Lagrangian immersion $f:L \to Y$, where $Y$ is $4k$-dimensional, we define the signed self-intersection number $J(f)$ by
$$2J(f) \coloneqq f_*[L] \cdot f_*[L] - (-1)^k \chi(L).$$
where $Y$ is given its symplectic orientation. It is equal to $0$ if $f$ is an embedding. 
Each self-intersection point $p \in \mathrm{SInt}(f)$ contributes $(-1)^{k + |p|}$ to $2J(f)$. 

In order to produce a Lagrangian submanifold $L \subset Y$ satisfying the conditions of Theorem \ref{thm:unobs}, and with the desired homology class $f_*[L] = \beta$, the idea is first to apply the Gromov--Lees $h$-principle \cite{Gromov:hprin,Lees:hprin} to construct an oriented Lagrangian immersion $f:L \to Y$ with $(-1)^k J(f)<0$ and $f_*[L] = \beta$.  
One then wishes to apply the following result of Ekholm--Eliashberg--Murphy--Smith:

\begin{thm}[Theorem 1.1 of \cite{EEMS}]\label{thm:eems}
Suppose that $Y$ is a simply-connected symplectic $4k$-manifold with $k>1$, and $f:L \to Y$ a Lagrangian immersion of a closed oriented $2k$-manifold. Then there exists another Lagrangian immersion $f':L \to Y$, Hamiltonian regular homotopic to $f$, with transverse self-intersections, and
$$\frac{|\mathrm{SInt}(f')|}{2} = \begin{cases} 
|I(f)| & \text{if $(-1)^{k}I(f) < 0$};\\
|I(f)|+2 & \text{otherwise,}
\end{cases}$$
where $I(f) = -J(f)$.\footnote{$J(f)$ is the Whitney self-intersection number for the orientation $\omega^{2k}$ of $Y$; the number $I(f)$ of \cite{EEMS} is its negative, as one sees from the formula $I(f_0)$$ = (-1)^{n/2}\chi(L)/2$ in \cite[§4.2]{EEMS}, and as is forced by \cite[Corollary 1.3(3)]{EEMS}, since $\chi(L) < 0$ is the flexible case.}
\end{thm}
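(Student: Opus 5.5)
Since this is Theorem 1.1 of \cite{EEMS}, I would prove it by reconstructing their argument in outline, paying attention to how $I(f)=-J(f)$ enters. The approach is to reduce the problem to two pieces of local, flexible surgery on the self-intersection points, and then to cancel pairs of points of opposite sign.

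\textbf{Step 1: making the immersion loose.} Perturb $f$ to have transverse double points. Then, inside a Darboux ball, insert a standard local model. This is a Lagrangian immersion obtained by a regular homotopy which creates a distinguished self-intersection point $p$ together with a \emph{loose} chart, that is, a stabilization/zig-zag in a Legendrian lift. This model contributes a fixed, computable amount to the count of double points. The Whitney invariant $J$ is a regular homotopy invariant, so it is unchanged. Once the immersion is loose, the $h$-principle for loose Legendrians (Murphy) applies to the Legendrian lift of $f$ in the contactization $Y\times\mathbb{R}$, or in a prequantization-type setting; here one uses simple connectivity of $Y$ to trivialize things.

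\textbf{Step 2: Whitney-trick cancellation.} Take two double points of opposite Whitney sign. Since $Y$ is simply connected and $2k\ge 4$ (this is where $k>1$ is needed), they can be joined by a Whitney disc. In the smooth world the Whitney trick would cancel them. In the Lagrangian world I would realize the cancellation as a Legendrian isotopy of the lift, which exists by Murphy's $h$-principle provided the formal isotopy exists and looseness holds. The formal obstruction should vanish exactly when the signs are opposite. Repeating this until all remaining double points have the same sign leaves $|I(f)|$ pairs of images, plus whatever the loose chart forces.

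\textbf{Step 3: counting the extra points.} The remaining question is whether the double points built into the loose chart can themselves be cancelled. This is the step I expect to be the main obstacle, and it carries the sign convention. The stabilization creates a double point $p$ whose Whitney sign is determined by the local model, namely degree $2$, contributing $(-1)^{k}$ up to the $J$ versus $I$ sign. If $(-1)^kI(f)<0$, there is a point of the opposite sign available to cancel $p$'s partner, without destroying looseness (one keeps a second loose chart elsewhere), and one gets exactly $|I(f)|$ pairs. Otherwise, the chart contributes two extra pairs that cannot be absorbed, giving $|I(f)|+2$. To make this precise I would first compute the sign of $p$ in the explicit local model in $\mathbb{C}^{2k}$, using the grading formula $(-1)^{k+|p|}$ stated earlier for contributions to $2J$. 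I would then check that this is consistent with $I(f_0)=(-1)^{k}\chi(L)/2$.
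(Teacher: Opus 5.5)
There is a genuine gap in how you get flexibility in Steps 1--2. A Legendrian lift of $f$ to a contactization $Y\times\mathbb{R}$ does not exist in general. $Y$ may be closed, so $\omega$ is not exact, and $f$ need not be exact either. A prequantization needs $[\omega]$ integral plus a lifting condition on $f$, and simple connectivity of $Y$ supplies none of this. More fundamentally, even when a lift exists, Murphy's $h$-principle only classifies loose Legendrians up to Legendrian isotopy by their formal class; it gives no control over Reeb chords. So it cannot cancel a pair of double points. To invoke it you would already need a target Legendrian whose projection has two fewer double points, and constructing that target is the whole problem. Your claim that ``the formal obstruction vanishes exactly when the signs are opposite'' therefore has no content in this framework.

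In the construction sketched in Section 4 of the paper, looseness belongs to the Legendrian $\Lambda\subset\partial B_r$ along which $f$ meets the boundary of the Darboux ball, so no global contact structure is needed. Inside $B_r$, $f$ is the standard exact filling of the stabilized unknot, with a single double point $p$ of degree $2$. Looseness of $\Lambda$ is then used to apply an \emph{existence} $h$-principle (Eliashberg--Murphy's, for Lagrangian caps with loose negative end) to the part of $f$ in $Y\setminus B_r$. This makes the double points there as few as smooth topology allows, and all of one sign; the Whitney trick is where $\pi_1(Y)=1$ and $2k\ge 4$ enter.

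Step 3 then uses the wrong mechanism. The point $p$ is never cancelled. It is the price of the loose end, since a loose Legendrian bounds no embedded exact Lagrangian in the ball. A ``second loose chart elsewhere'' would bring its own copy of $p$. The correct count runs as follows.
\begin{enumerate}
\item Since $|p|=2$, $p$ contributes $(-1)^k$ to $J$, i.e.\ $-(-1)^k$ to $I$.
\item Hence the double points outside $B_r$ have algebraic count $I(f)+(-1)^k$. They can be taken all of one sign, giving $|(-1)^kI(f)+1|$ of them.
\item If $(-1)^kI(f)<0$, these points are even like $p$ and number $|I(f)|-1$, so the total is $|I(f)|$.
\item Otherwise they are odd and number $|I(f)|+1$, so the total is $|I(f)|+2$.
\end{enumerate}
So the $+2$ is $p$ plus one extra odd double point compensating for it, not ``two extra pairs'' from the chart. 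In the first case nothing is cancelled against $p$ or against its ``partner'', which is just the other preimage of the same double point. Correspondingly, your Step 2 tally of ``$|I(f)|$ pairs plus whatever the chart forces'' is off by one in each case.
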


The reason for the difference of $+2$ between the two cases will be explained at the start of the next section. 
(As the spaces $Y$ considered in \cite{openai1,openai2,openai3} are not simply connected, they provide an alternative argument to prove an analogous result in the situation where they need it, but that is not relevant for the rest of this discussion.)

The problem that arose in \cite{openai1,openai2,openai3} was that they assumed that $I(f) = J(f)$, rather than $-J(f)$. 
We now explain why this was fatal to the argument. 

We will call a point $p \in \mathrm{SInt}(f)$ \emph{odd} if $|p|$ is odd, and \emph{even} if $|p|$ is even. 
Note that Theorem \ref{thm:eems} says that, as $(-1)^k I(f) = -(-1)^k J(f) > 0$, we can modify the immersion $f$ to produce an immersion $f'$ with $|I(f)|+1$ odd self-intersection points, and one even self-intersection point, call it $p$. 
The odd self-intersection points cannot contribute to $A^2_L$; but the even self-intersection point  $p$ will contribute two even generators to $A^*_L$, and we cannot rule out the possibility that one of them might have degree $2$, hence violate the hypothesis of Theorem \ref{thm:unobs}. 
Indeed, we will see in the next section that the construction always produces a self-intersection point of degree $2$, which will cause the Lagrangian $L$ to be obstructed.

Had the signs been the other way round, i.e., had the statement of Theorem \ref{thm:eems} been with $J(f)$ in place of $I(f)$, it would have said that the modified immersion $f'$ had $|I(f)|$ odd self-intersection points, none of which could have had degree $2$; so the hypothesis of Theorem \ref{thm:unobs} would have been satisfied and guaranteed that the Lagrangian immersion was unobstructed.

\begin{rem}
    The document \cite{openai2} used a variation on the argument, where they resolved all the self-intersection points of $f$ to obtain a Lagrangian embedding, which, however, was only $\mathbb{Z}/4$-graded, and applied a corresponding modification of Theorem \ref{thm:unobs}. 
     This argument fails because surgery at a self-intersection point $p$ will only produce a $\mathbb{Z}/2N$-graded Lagrangian if $|p| \equiv 1$ mod $2N$; in particular, one needs all self-intersection points to be odd.
     Thus, the even self-intersection point provided by the correct statement of Theorem \ref{thm:eems} will destroy the grading of the Lagrangian.
\end{rem}

\section{Why the Lagrangian immersion is typically obstructed}\label{sec:ob}

The Lagrangian immersion $f':L \to Y$ of Theorem \ref{thm:eems} (or the analogous constructions from \cite{openai1,openai2,openai3}) is produced from $f:L \to Y$ as follows. 
First, one finds a Darboux ball $B = B_{2r} \subset Y$, symplectomorphic to the standard ball of radius $2r$ in the symplectic vector space $\mathbb{R}^{2n}$, such that $f(L)$ corresponds to $\mathbb{R}^n \cap B_{2r}$. 
One then modifies $f$ inside $B_{2r}$ so that it has a standard form inside $B_r$: namely, it is an exact filling of the stabilization $\Lambda$ of the Legendrian unknot, which is conical over the complement of a smaller ball $B_\sigma \subset B_r$, and has a single transverse self-intersection point $p$ of degree $2$ inside $B_\sigma$. 

The point of this modification is that it brings us into the world of flexibility, as $\Lambda$ is loose (being a stabilization), see \cite{Murphy:thesis}. 
This looseness of $\Lambda$ allows us to modify the part of $f$ lying outside of $B_r$ to remove as many intersection points as are allowed by topology, arranging that all self-intersection points outside $B_r$ have the same sign; but we are left with the degree-$2$ self-intersection point $p$ inside $B_r$, in either case.
This is the reason for the difference of $2$ between the two cases of Theorem \ref{thm:eems}: if $(-1)^kI(f)<0$ then we may arrange that all self-intersection points including $p$ are even, but if $(-1)^kI(f)\ge 0$ then we may arrange that all self-intersection points are odd \emph{except for $p$}. 
The degree-$2$ self-intersection $p$ is the cost of flexibility, and as we will now see, it also gives rise to the obstructedness of the resulting Lagrangian immersion.

\begin{lem}\label{lem:smalldisc}
    The signed count of holomorphic discs mapping to $B_\sigma$, with boundary on $L$, and a single corner at $p$, is $\pm 1$.
\end{lem}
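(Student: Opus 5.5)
Inside $B_r$ the immersion is an explicit local model: an exact Lagrangian filling $L_{loc}$ of the stabilized Legendrian unknot $\Lambda \subset S^{2n-1}$, conical outside $B_\sigma$, with one transverse double point $p$ of degree $2$. The plan is to compute the count in this model. First I will reduce to the model. The maximum principle, or monotonicity applied to the conical end, shows that any holomorphic disc with boundary on $L$ that touches $B_\sigma$ and has its only corner at $p$ must stay inside $B_r$, provided the almost complex structure is standard on $B_r$ and the disc's energy is at most the action of $p$. The energy of a one-cornered disc is fixed by exactness: it equals the action difference $\mathcal{A}(p)$ of the two sheets at $p$, which is small. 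So the problem becomes a count of discs in $\mathbb{C}^n$ with boundary on the exact immersed Lagrangian $L_{loc}$ and one positive corner at $p$.

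Second, I will check the expected dimension. Since $|p| = 2$, the moduli space of discs with one corner $p$ and no other boundary inputs has virtual dimension $|p| - 2 = 0$. This matches the fact that such discs contribute to $\mathfrak{m}_0$ in degree $2$. So the count is a well-defined signed integer once we have regularity and compactness. Compactness follows from exactness of $L_{loc}$: there is no sphere or disc bubbling, because exact discs without corners have zero energy, and breaking is excluded because $p$ is the only double point.

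Third, I will make the computation invariant. The count $n_p$ is unchanged under compactly supported exact deformations of $L_{loc}$ that preserve the double point, since the moduli space is cobordant along the deformation. So it can be computed in any convenient model. For this I will use the standard local model in which $L_{loc}$ is obtained from the Whitney-type immersed sphere, or from the immersed disc filling the unknot with one double point, via a product or suspension construction. In that model the relevant disc is explicitly a unique embedded disc, the "half-disc" in the complex line through $p$ in the direction joining the two sheets. This is the same computation as for the Whitney sphere, where $p$ bounds exactly one rigid disc on each side. Transversality for this disc follows from splitting the linearized operator into one-dimensional pieces with index $0$ and trivial kernel and cokernel. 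So $n_p = \pm 1$.

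Alternatively, to avoid choosing a model, one can argue via the $A_\infty$ structure. If $n_p = 0$, then $p$ would be a cycle for the Floer differential of $L_{loc}$ with the cone of the unknot at infinity. The wrapped or Legendrian invariants of the loose $\Lambda$ vanish (Chekanov–Eliashberg DGA is trivial), and this forces $\mathfrak{m}_0 \neq 0$ in the one-generator degree-$2$ part. Hence $n_p$ is a unit over $\mathbb{Z}$, and since the moduli space is small it is $\pm 1$.

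The main obstacle is the explicit identification of the unique disc and proof of its regularity, i.e.\ pinning down the local model from the construction of \cite{EEMS}. I would first try the suspension model, where symmetry reduces the Cauchy–Riemann problem to dimension one.
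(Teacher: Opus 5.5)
Your main route has a genuine gap at the model computation, and that is where all the content of the lemma lies. Compactly supported exact deformations fix the conical end. So any comparison model must itself be an exact filling of (a Legendrian isotopic to) the stabilization $\Lambda$, with one double point of degree $2$. The models you name are not of this kind.
\begin{itemize}
\item The Whitney sphere is closed. Its one-cornered discs are the lobes of the figure-eights it cuts out in the complex lines $\mathbb{C}\cdot v$, $v\in S^{n-1}$. They form an $S^{n-1}$-family, not one rigid disc on each side; its double point has degree $n+1$, not $2$.
\item A disc filling the standard unknot is conical over a Legendrian that is not Legendrian isotopic to the loose $\Lambda$. The unknot has nontrivial Chekanov--Eliashberg homology; $\Lambda$ does not.
\end{itemize}
More fundamentally, nothing in this route uses any property of $\Lambda$. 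Yet the value $\pm 1$ is forced by exactly such a property: the Chekanov--Eliashberg DGA of $\Lambda$ has a Reeb chord $c_2$ with $\partial c_2=\pm 1$.

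Your alternative route invokes that input but does not supply the mechanism that turns it into $n_p=\pm1$. The step \emph{if $n_p=0$ then $p$ would be a cycle} confuses the $p$-coefficient of $\mathfrak{m}_0$ with $\mathfrak{m}_1(p)$. Even granting $n_p\neq 0$, \emph{the moduli space is small} does not make $n_p$ a unit.

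The missing idea is a one-dimensional cobordism. Consider discs in the completion of $B_\sigma$ with boundary on the completion of $L$ and a single boundary puncture asymptotic to $c_2$. This moduli space is one-dimensional, and its compactification has two kinds of ends:
\begin{itemize}
\item the disc escaping into the symplectization end, whose signed count is the coefficient $\pm1$ in $\partial c_2$;
\item broken configurations, consisting of a disc with puncture $c_2$ and a corner at $p$, together with a disc with a single corner at $p$.
\end{itemize}
Hence $\pm 1 = m\cdot n_p$ for some integer $m$, so $n_p=\pm1$. This is the paper's argument, and it never identifies a disc explicitly.

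Phrased in the language of Legendrian invariants you were reaching for: if a prime $q$ divided $n_p$, then $\mathfrak{m}_0\equiv 0$ mod $q$. The filling would then induce an augmentation of the DGA of $\Lambda$ over $\mathbb{F}_q$, contradicting $\partial c_2=\pm1$. Running this over every prime is what yields a unit, and proving that induced augmentation exists is the same cobordism argument.
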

\begin{proof}[Proof (sketch)]
    Observe that we can arrange that there are three Reeb chords from $\Lambda$ to itself: $c$, $c_1$, and $c_2$ \cite[Example 4.7]{EES}. 
    Furthermore, the differential of $c_2$ in the Chekanov--Eliashberg differential graded algebra is $\partial(c_2) = \pm 1$ (see \cite[Proof of Proposition 4.8]{EES}). 
This implies that the signed count of the moduli space of holomorphic discs mapping to the symplectization $\partial B_\sigma \times \mathbb{R}$, having boundary on $\Lambda \times \mathbb{R}$ and asymptotic to the Reeb chord $c_2$ along a boundary puncture, modulo translation, is $\pm 1$. 

We now consider the moduli space of holomorphic discs mapping into the completion of $B_\sigma$ along its boundary, having boundary on the completion of $L$, asymptotic to $c_2$ along a boundary puncture.  
This moduli space will be one-dimensional and admits a compactification where one end (corresponding to the disc going `to infinity') is identified with the moduli space above, and the other corresponds to broken configurations consisting of two discs: one with one boundary puncture asymptotic to $c_2$ and one corner at $p$, and the other with one corner at $p$ (cf. \cite{SFTcompactness}).

As the signed count of points at one end of the compactification is $\pm 1$, so is the signed count at the other end, consisting of the broken configurations. 
The latter is the product of signed counts of the two moduli spaces of discs making up the broken configuration; hence the signed count of both (and in particular the count of discs with a corner at $p$) is $\pm 1$. 
\end{proof}

We claim that for $\sigma$ sufficiently small, $f:L \to Y$ is obstructed. 
By exactness of $L \cap B_r$, every holomorphic disc in $B_r$ with boundary on $L$ and a single corner at $p$ has area equal to the action $a(p)$ of the double point, and by Lemma \ref{lem:smalldisc} the signed count of these is $\pm 1$. 
As the disc lives in the ball of radius $\sigma$, we have $a(p) = O(\sigma^2)$, whereas by monotonicity every holomorphic disc with a corner at $p$ that leaves $B_r$ has area bounded below by a constant depending only on $B_r$.  Let $b\in A^1_L$ have positive valuation. A disc with output $p$ that leaves $B_r$ has area greater than $a(p)$ for $\sigma$ sufficiently small. A disc with output $p$ contained in $B$ has area exactly $a(p)$, and any insertion of $b$ along the boundary raises the valuation of the contribution above $a(p)$. Hence, the coefficient of $p$ in $\mathfrak{m}_0^b$ is $\pm T^{a(p)}$ plus terms of higher valuation, for every $b \in A^1_L$ of positive valuation. So $\mathfrak{m}_0^b \neq 0$, and $L$ is obstructed.

For the Lagrangian immersion to have a chance of being unobstructed, it would thus be necessary to increase $\sigma$ enough that the low-energy term $\pm T^{a(p)}$ can cancel with a contribution from some other holomorphic disc. 
However, we note that there is an upper bound $r$ on how much we can increase $\sigma$, and furthermore, this bound will typically be small: as $f:L \to Y$ is constructed via an $h$-principle, it will typically have a lot of `wrinkles' in it, meaning we can't choose the ball $B_{2r}$ very large (the largest possible radius of this ball is called the \emph{Gromov width} of $L$). 
Even if we could take it large, there would need to be other holomorphic discs ending on $p$ and with area exactly $a(p)$ whose contributions would need to cancel those from the discs produced by Lemma \ref{lem:smalldisc}, and there is no reason why such holomorphic discs should exist.

\bibliographystyle{alphaurl}
\bibliography{references.bib}

\vspace{\fill} 
\noindent
\begin{tabular}{@{}l}
  \textit{Department of Mathematics}, \textit{Imperial College, London}, \\ \texttt{y.lekili@imperial.ac.uk} \\
  \textit{School of Mathematics, University of Edinburgh, and the Maxwell Institute}, \\ \texttt{nick.sheridan@ed.ac.uk}
\end{tabular}

\end{document}